\newcommand{\espE}{\mathbb{E} }
\newtheorem{theorem}{Theorem}
\newtheorem{lemma}[theorem]{Lemma}
\newtheorem{proposition}[theorem]{Proposition}
\newcommand{\revAns}{\textcolor{black}}
\newcommand{\VBS}{V_{BS}}
\newcommand{\Veff}{V_{eff}}
\def\BibTeX{{\rm B\kern-.05em{\sc i\kern-.025em b}\kern-.08em
    T\kern-.1667em\lower.7ex\hbox{E}\kern-.125emX}}
\begin{document}

\title{Stabilization and Optimal Control of Interconnected SDE - Scalar PDE System}


\author{Gabriel Velho$^{1}$, 
\thanks{$^{1}$Universit\'e Paris-Saclay, CNRS, CentraleSup\'elec, Laboratoire des signaux et syst\`emes, 91190, Gif-sur-Yvette, France. } 
Jean Auriol$^{1}$, 
Riccardo Bonalli$^{1}$, 
Islam Boussaada$^{2}$ 
\thanks{$^{2}$Universit\'e Paris-Saclay, CNRS, CentraleSup\'elec, Inria, Laboratoire des signaux et syst\`emes, 91190, Gif-sur-Yvette, France  \& IPSA, 94200, Ivry sur Seine, France.}%
}

\maketitle


\begin{abstract}
In this paper, we design a controller for an interconnected system consisting of a linear Stochastic Differential Equation (SDE) actuated through a linear hyperbolic Partial Differential Equation (PDE). Our approach aims to minimize the variance of the state of the SDE component. We leverage a backstepping technique to transform the original PDE into an uncoupled stochastic PDE. As such, we reformulate our initial problem as the control of a delayed SDE with a non-deterministic drift. Under standard controllability assumptions, we design a controller steering the mean of the states to zero while keeping its covariance bounded. As final step, we address the optimal control of the delayed SDE employing Artstein's transformation and Linear Quadratic stochastic control techniques.
\end{abstract}



\section{Introduction}

The interest in interconnected systems of Ordinary Differential Equations (ODEs) and Partial Differential Equations (PDEs) emerged when delays in ODEs were associated with transport equations, allowing in~\cite{krstic_boundary_2008} for a re-interpretation of the classical Finite Spectrum Assignment \cite{artstein_linear_1982}. Interconnections involving hyperbolic PDEs and ODEs can model the propagation of torsional waves in drilling systems~\cite{aarsnes_torsional_2018}, deepwater construction vessels \cite{stensgaard_subsea_2010}, or heat exchangers systems.  Lyapunov and backstepping methods then facilitated the design of stabilizing controllers for such interconnected systems\cite{di_meglio_stabilization_2018, auriol_robustification_2023, deutscher_backstepping_2017, wang_delay-compensated_2020}. 

In realistic scenarios, dynamical processes are frequently influenced by disturbances originating, e.g., from imprecise measurements, parameter uncertainties, external disruptions, etc. \cite{yong_stochastic_1999}. Such disturbances may considerably alter the dynamics. Therefore, effectively mitigating these uncertainties is crucial to establish the reliability and safety of such controlled systems  \cite{ chen_optimal_2016}. Stochastic Differential Equations (SDEs) provide broad and accurate modelization of a large class of uncertain systems \cite{touzi_introduction_2013}. Stochastic control enables the effective design of stabilizing controllers for SDEs, which are also robust against random fluctuations. Stabilizing SDEs in expectation is a popular and effective control technique. However, when doing so, one must make sure the variance remains bounded to ensure reliability \cite{ chen_optimal_2016}. Robustness may also be reliably achieved by seeking controllers that keep the state variance relatively small, see \cite{ chen_optimal_2016 ,liu_optimal_2023}. 

In interconnected ODE-PDE systems, modeling uncertainties that appear in the ODE dynamics is often key to accuracy and robustness. Some methods have been proposed to stabilize PDE+ODE systems in the presence of bounded or model-based disturbances \cite{redaud_output_2024, deutscher_backstepping_2017}. It has been shown that SDEs may offer a broader framework for modeling such disturbances.
Numerous methods exist for addressing the issue of SDEs with delays \cite{oksendal_maximum_2001, velho_mean-covariance_2023}. While delays can be interpreted as transport PDEs, coupled systems with more complexity have yet to be thoroughly investigated.
Our study aims to provide insights and methodologies for effectively controlling such interconnected SDE-PDE systems for the first time.
Specifically, in this paper, we bridge the gap between deterministic PDE+ODE control and stochastic control by merging methodologies from both fields. Our main contribution is the introduction of a novel approach to reliably control a PDE+SDE system with bidirectional coupling. The proposed controllers are implementable and straightforward to compute. 
 Concretely our contribution is twofold:
 \begin{itemize}
     \item We propose a feedback law achieving the stabilization of the PDE+SDE system, driving the mean of the states to zero while bounding the variance.
     \item Additionally, we introduce an optimal control approach to minimize the SDE variance over a finite time horizon, thus enhancing the robustness of the control strategy with respect to disturbances.
 \end{itemize}
 Our approach leverages the backstepping methodology to allow a reformulation of the coupled system as a system of delayed SDE with non-deterministic drift. We address our control objectives by employing Artstein's transformation and Linear Quadratic (LQ) control techniques.

 The paper is organized as follows. In Section \ref{PF}, we outline the problem formulation. Next, in Section \ref{SR}, we utilize the backstepping transformation to reduce the problem to controlling an input delayed SDE with a random drift term, while also establishing the well-posedness of the system. In Section \ref{VC}, we propose a stabilizing feedback law for the SDE. Finally, in Section \ref{LQ}, we present an optimal control approach to minimize the variance of the SDE.

\section{Problem formulation}\label{PF}

\subsection{Notations}
For a given $n \in \mathbb{N} \setminus \{ 0 \}$, state variables take values in $\mathbb{R}^n$, while control variables take values in $\mathbb{R}$. 
We assume we are given a filtered probability space $(\Omega, \mathcal{F} \triangleq (\mathcal{F}_t)_{t \in [0,\infty)}, \mathbb{P})$, 
and that stochastic perturbations are due to a one-dimensional Wiener process $W_t$, which is adapted to the filtration $\mathcal{F}$.
Let $T > 0$ be some given time horizon. For any $r \in \mathbb{N} \backslash \{0\}$, we denote by $L_\mathcal{F}^2([0,T] , \mathbb{R}^r)$ the set of square integrable processes $P: [0,T]\times\Omega \to \mathbb{R}^r$ that are $\mathcal{F}$--progressively measurable, whereas the subset $C^2_\mathcal{F}([0,T] , \mathbb{R}^r) \subseteq L_\mathcal{F}^2([0,T] , \mathbb{R}^r)$ contains processes whose sample paths are continuous.
The spaces of semi-definite and definite positive symmetric matrices in $\mathbb{R}^n$ are denoted by $\mathcal{S}^+_n$ and $\mathcal{S}^{++}_n$, respectively. 
If $X \in L_\mathcal{F}^2([0,T] , \mathbb{R}^n)$, we denote by $V_X(\cdot)$ its variance, that is
$
V_X(t) \triangleq \espE[ (X(t) - \espE[X(t)])^T (X(t) - \espE[X(t)]) ] \in \mathbb{R}.
 $ Finally, we recall that if $f_1$ and $f_2$ are two deterministic functions in $L^2([0,T], \mathbb{R})$, It\^o formula yields 
 \small
 \begin{align}
     \espE \left[  \left( \int_0^t f_1(s) dW_s  \right) \left( \int_0^t f_2(s) dW_s  \right) \right] = \int_0^t f_1(s) f_2(s) ds. \label{eq:quadratic_variation_of_stochastic_integral}
 \end{align}
\normalsize

\subsection{Control system}

In this paper, we consider coupled PDE+SDE systems of the form
\begin{equation}\label{eq:coupled_PDE_SDE}
\left\{
\begin{array}{l}
     dX(t) = ( A X(t) + B v(t,0) ) dt + \sigma(t) dW_t \\
     u_t(t,x) + \lambda u_x(t,x) =  \eta^{+}(x) v(t,x) \\
     v_t(t,x) - \mu v_x(t,x) = \eta^{-}(x) u(t,x) \\
     u(t,0) = q v(t,0) + M X(t) \\
     v(t,1) = \rho u(t,1) + V_{in}(t) \\
     X(0) = X_0,~ u(0,x) = u_0(x),~ v(0,x) = v_0(x),
\end{array} 
\right.
\end{equation}
in the time-space domain $[0, +\infty) \times [0,1]$. The state of the system is $(X(\cdot), u(\cdot,x), v(\cdot,x)) \in \mathbb{R}^n \times (L^2([0,1]))^2$. The velocities $\mu>0$ and $\lambda>0$ are assumed to be constant, whereas the coupling terms $\eta^{+}$ and $\eta^{-}$ are continuous functions. The boundary coupling terms $\rho$ and $q$ verify $|\rho q|<1$ to avoid an infinite number of unstable poles, which is necessary to guarantee the existence of robustness margins for the closed-loop system \cite{auriol_delay-robust_2018}. \revAns{We also assume $q \neq 0$.} The matrices $A \in \mathbb{R}^{n\times n}, B \in \mathbb{R}^{n\times 1}, M \in \mathbb{R}^{1\times n}$ are constant, and $\sigma$ is a deterministic diffusion in $L^\infty([0,T], \mathbb{R}^n)$. Finally, $(X_0, u_0, v_0) \in \mathbb{R}^n\times (L^2([0,1]))^2$ and the control input $V_{in}$ takes values in $\mathbb{R}$.
The class of system~\eqref{eq:coupled_PDE_SDE} naturally appear when modeling a heat-exchanger connected to a temperature system subject to random perturbations (e.g., the temperature of a building disrupted by a random outside temperature or sunlight). We defer the proof of the closed-loop well-posedness of system~\eqref{eq:coupled_PDE_SDE} to Section~\ref{SR}.

\subsection{Objective and approach overview}

The system naturally contains multiple feedback loops or couplings that can potentially introduce instabilities. The objective of the paper is twofold:
\begin{itemize}
    \item First, we aim to design a feedback control law to stabilize the mean of the system, driving it to zero while ensuring the variance remains bounded.
    \item Then, we provide a more noise-robust controller for the SDE state. This can be typically obtained through variance minimization \cite{liu_optimal_2023}. 
\end{itemize}


To achieve the aforementioned design, we propose the following methodology: 
\begin{enumerate}
    \item First, we use an invertible backstepping transformation to map the coupled SDE-PDE system into a cascade SPDE-SDE system. 
    \item Secondly, using the method of characteristics, we prove the well-posedness of this latter system (and consequently, of the original system). We also show that the state $X$ can be expressed as the solution of an SDE with input delay and random drifts.
    \item  Then, using the Artstein transform, we design an appropriate stabilizing control law. 
    \item Finally, we tackle the minimization of the variance using  classical tools from stochastic LQ control.
\end{enumerate}

In our approach, the control law $V_{in}$ will write $V_{in} = \VBS + \Veff.$ 
\revAns{Here, the component $\VBS$ corresponds to a backstepping controller that would stabilize the PDE subsystem in the absence of the SDE. The component $\Veff$ corresponds instead to the controller that will be leveraged to solve the variance minimization problem. }


\section{Backstepping Transformation for System Simplification.}\label{SR}



In this section, we show that the original system~\eqref{eq:coupled_PDE_SDE} can be rewritten as a delayed SDE with random drifts. 

\subsection{Backstepping transformation}

Taking inspiration from \cite{auriol_delay-robust_2018}, we consider the following backstepping change of variables:
\begin{align}
    \begin{pmatrix}
        \alpha(t,x)\\\beta(t,x)
    \end{pmatrix}=\begin{pmatrix}
        u(t,x)\\v(t,x)
    \end{pmatrix}&+\int_0^x K(x,y)\begin{pmatrix}
        u(t,y)\\v(t,y)
    \end{pmatrix}dy \nonumber \\   &+\gamma(x)X(t),\label{eq:backstepping_transform_alpha_beta_definition}
\end{align}
where, $K(x,y)=\begin{pmatrix}
    K_{uu}(x,y) & K_{uv}(x,y)\\  K_{vu}(x,y) &  K_{vv}(x,y)
\end{pmatrix}$ and $\gamma(x)=\begin{pmatrix}
    \gamma_\alpha(x) & \gamma_\beta(x)
\end{pmatrix}^T$. The kernels  $K_{uu}, K_{uv}, K_{vu}$, and $K_{vv}$ are continuous functions defined on the triangle domain $\mathcal{T} \triangleq \left\{ (x,y), x \in [0,1], y \in [0,x]  \right\}$, whereas $\gamma_\alpha$ and $\gamma_\beta$ are in $C^1([0,1])$. On their respective domains of definition, $\gamma_\alpha$ and $\gamma_\beta$ verify the following set of \textit{kernel equations}:
\begin{equation}\label{eq:kernel_equations_alpha}
\left\{
\begin{array}{l}
     \Lambda K_x(x,y)+K_y(x,y)\Lambda+K(x,y)\eta(y)=0, \\
     \Lambda\gamma\revAns{'}(x)+\gamma(x)\bar A+\lambda K(x,0)\bar M=0, \\
     \Lambda K(x,x)-K(x,x)\Lambda=-\eta(x), ~\gamma_\alpha(0)=-M,\\
     K(x,0)\begin{pmatrix}
         \lambda q & -\mu
\end{pmatrix}^T+\gamma(x)B=0,~\gamma_\beta(0)=0,
\end{array} 
\right.
\end{equation}
where $\Lambda=\text{diag}(\lambda, -\mu)$, $\eta=\begin{pmatrix}
    0& \eta^+\\\eta^- &0
\end{pmatrix}$, $\bar A=\text{diag}(A, A)$, $B=\text{diag}(B, B)$ and $\bar M=\text{diag}(M, 0_{1\times n})$.
The set of equations \eqref{eq:kernel_equations_alpha} is well-posed and admits a unique solution $(K_{uu}, K_{uv}, K_{vu}, K_{vv}, \gamma_\alpha, \gamma_\beta)$ in $\left( C^0(\mathcal{T}) \right)^4 \times \left( C^1([0,1]) \right)^2$, see, e.g., \cite{auriol_delay-robust_2018}. 
Differentiating equation~\eqref{eq:backstepping_transform_alpha_beta_definition} with respect to time and space (in the sense of distributions), and integrating by parts, we can show that the target states $(\alpha,\beta,X)$ are solutions to the following \textit{target system}:
\begin{equation}\label{eq:first_target_system}
\left\{
\begin{array}{l}
     dX(t) = ( A X(t) + B \beta(t,0) ) dt + \sigma(t) dW_t, \\
     d\alpha(t,x) + \lambda \alpha_x(t,x) dt = \gamma_\alpha(x) \sigma(t) dW_t, \\
     d\beta(t,x) - \mu \beta_x(t,x) dt = \gamma_\beta(x) \sigma(t) dW_t, \\
     \alpha(t,0) = Q \beta(t,0),~\beta(t,1) = \Veff(t),

\end{array} 
\right.
\end{equation}
where $\VBS$ is defined as 
\begin{equation}\label{eq:feedback_law_V_BS_PDE_target}
\begin{split}
    & \VBS(t)  = - \rho u(t,1) - \gamma_\beta(1) X(t) \\
    & \ - \int_0^1 K_{vu}(1,y) u(t,y) dy - \int_0^1 K_{vv}(1,y) v(t,y) dy.
\end{split}
\end{equation}
Unlike the existing results in the literature on stabilization of PDE+ODE systems \cite{auriol_delay-robust_2018, di_meglio_stabilization_2018}, 
we obtain additional terms in the target system. These are due to the additive noise in the SDE that is mapped back into the PDE, through the boundary coupling. Consequently, the states $\alpha$ and $\beta$ are solutions of an SPDE (whose solution should be understood in the weak sense \cite{hairer_introduction_2009}). As it will be seen in the following, this noise cannot be entirely compensated by the control input and will be mapped back into the SDE as additional random drift terms. 
Note that, in the absence of the SDE sub-system, the PDE would be finite-time stable. However, due to the cancellation of the term $\rho u(t,1)$, the feedback operator is not strictly proper. This flaw may cause delay/robustness issues as emphasized in \cite{auriol_delay-robust_2018}. \revAns{Partial cancellation of the reflection and filtering methods have been respectively proposed in \cite{auriol_delay-robust_2018} and \cite{auriol_robustification_2023} 
to guarantee the existence of robustness margins (with respect to delays and uncertainties). However, these techniques do not directly adjust to the current stochastic configuration. In particular, we should be able to show the robust mean square stability but quantifying the effects of these techniques on the variance seems challenging.  Such a robustness analysis is out of the scope of this paper and left for future works.}


\subsection{Well-posedness of the closed loop}
\begin{lemma}\label{lem:well_posed_SPDE_L2}
    If $\Veff$ is in $C^2_\mathcal{F}([0,T])$, then the closed-loop target system~\eqref{eq:first_target_system} is well-posed, that is the processes 
    $t \mapsto \alpha(t,\cdot)$ and $t \mapsto \beta(t,\cdot)$ are in $C^2_\mathcal{F}([0,T] ; L^2(0,1))$, and the SDE state $X$ is in $C^2_\mathcal{F}([0,T] ; \mathbb{R}^n)$.
\end{lemma}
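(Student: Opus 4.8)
\emph{Proof idea.} The plan is to exploit the cascade structure of the target system~\eqref{eq:first_target_system}: the $\beta$--equation is forced only by the control $\Veff$ and by the noise, the $\alpha$--equation is then driven by the trace $\beta(\cdot,0)$, and the SDE for $X$ is driven by the same trace. I would therefore solve the three equations successively, in the order $\beta \to \alpha \to X$, propagating the regularity at each stage. Recall that the initial data $\alpha(0,\cdot)$ and $\beta(0,\cdot)$ are the images of $(X_0,u_0,v_0)$ under the bounded transformation~\eqref{eq:backstepping_transform_alpha_beta_definition}, hence they lie in $L^2(0,1)$.

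First I would solve the two transport SPDEs by the method of characteristics, which here is preferable to the usual boundary--lifting reduction because $\Veff$ is merely assumed to have continuous paths (not to be a semimartingale), so that $d\Veff$ is unavailable. Following the characteristic $s \mapsto x + \mu(t-s)$ of the $\beta$--equation, one obtains, writing $\delta_\beta(x):=(1-x)/\mu$, that for $t \ge \delta_\beta(x)$
\begin{equation*}
\beta(t,x) = \Veff\!\left(t - \delta_\beta(x)\right) + \int_{t-\delta_\beta(x)}^{t} \gamma_\beta\!\left(x + \mu(t-s)\right)\sigma(s)\, dW_s ,
\end{equation*}
while for $t < \delta_\beta(x)$ the term $\Veff(t-\delta_\beta(x))$ is replaced by $\beta(0, x+\mu t)$ and the lower integration limit by $0$. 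A direct computation using the stochastic Fubini theorem shows this process solves the weak formulation of the $\beta$--equation, uniqueness follows from a standard energy estimate, and the trace $\beta(\cdot,0)$ is well defined by the same formula at $x=0$. The $\alpha$--equation is treated identically along the characteristic $s \mapsto x - \lambda(t-s)$, with boundary datum $\alpha(t,0) = Q\beta(t,0)$ and initial datum $\alpha(0,\cdot)$.

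Then I would establish the announced regularity. For the stochastic convolution term It\^o's isometry gives, uniformly in $(t,x)$,
\begin{equation*}
\mathbb{E}\!\left[ \left( \int_{\max(0,\,t-\delta_\beta(x))}^{t} \gamma_\beta(x+\mu(t-s))\sigma(s)\, dW_s \right)^{2} \right] \le \frac{\|\gamma_\beta\|_\infty^{2}\,\|\sigma\|_{L^\infty}^{2}}{\mu} ,
\end{equation*}
so that $\sup_{t\in[0,T]} \mathbb{E}\|\beta(t,\cdot)\|_{L^2(0,1)}^2 < \infty$; progressive measurability is inherited from $\Veff$, from the Wiener process, and from the stability of stochastic integration under progressive measurability. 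For path continuity in $L^2(0,1)$: the contribution of $\Veff$ and $\beta(0,\cdot)$ has squared $L^2(0,1)$--norm equal, after the change of variables $r = t - \delta_\beta(x)$, to a constant times the integral of $\Veff^{2}$, resp. $\beta(0,\cdot)^{2}$, over a time window depending continuously on $t$, which is continuous in $t$ because the paths of $\Veff$ are uniformly continuous on $[0,T]$ and because translation is continuous in $L^2$; the stochastic convolution contribution is handled by combining the continuity of $t\mapsto\int_0^t(\cdot)\,dW_s$ with dominated convergence, upgraded to sample--path continuity by a Kolmogorov--type estimate on its increments. This yields $\beta \in C^2_\mathcal{F}([0,T];L^2(0,1))$ and $\beta(\cdot,0)\in L^2_\mathcal{F}([0,T];\mathbb{R})$. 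Repeating the argument for $\alpha$ — noting that the smoothing by the transport operator produces $\alpha \in C^2_\mathcal{F}([0,T];L^2(0,1))$ even though the boundary datum $Q\beta(\cdot,0)$ is only $L^2$ in time — completes this stage.

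Finally, I would substitute $\beta(\cdot,0)\in L^2_\mathcal{F}([0,T];\mathbb{R})$ into $dX = (AX(t) + B\beta(t,0))\,dt + \sigma(t)\,dW_t$. This is a linear SDE with an affine $L^2_\mathcal{F}$ drift perturbation and a bounded deterministic diffusion, so the classical existence--uniqueness theory (Picard iteration and Gr\"onwall's lemma) yields a unique solution
\begin{equation*}
X(t) = e^{At}X_0 + \int_0^t e^{A(t-s)}B\,\beta(s,0)\,ds + \int_0^t e^{A(t-s)}\sigma(s)\,dW_s \ \in\ C^2_\mathcal{F}([0,T];\mathbb{R}^n),
\end{equation*}
which establishes the lemma. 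I expect the main obstacle to be the regularity bookkeeping of the transport SPDEs in the third step — verifying that the characteristic formula is indeed the weak solution, and obtaining sample--path continuity of the $L^2(0,1)$--valued processes while uniformly controlling the noise term — since the merely continuous (non--semimartingale) boundary input $\Veff$ rules out the standard lifting argument and forces one to work directly with characteristics and with the continuity of translations in $L^2$.
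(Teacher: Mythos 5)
Your proposal is correct and follows essentially the same route as the paper: solve the target system in the cascade order $\beta \to \alpha \to X$ via the method of characteristics, establish sample-path continuity of the stochastic convolution in $L^2(0,1)$ (the paper cites a result of Hairer where you invoke It\^o isometry plus a Kolmogorov-type estimate), show the trace $\beta(\cdot,0)$ lies in $L^2_\mathcal{F}$, and conclude by linear SDE theory. The only differences are that you spell out details the paper delegates to citations (the pre-delay case $t<\delta_\beta(x)$, the weak-formulation check, the translation-continuity argument for the boundary datum of $\alpha$), which is consistent with, not divergent from, the published argument.
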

\begin{proof}
Using the method of characteristics, we have for all $t>\frac{1}{\mu}$ and all $x\in [0,1]$, $
\beta(t,x) = \Veff\left(t - \frac{1-x}{\mu} \right) + \int_{t-\frac{1-x}{\mu}}^t \gamma_\beta \left( x + \mu (t-s) \right) \sigma(s) dW_s.$
Since $\gamma_\beta$ is continuously differentiable, we obtain~\cite[Theorem 6.10]{hairer_introduction_2009} that the stochastic convolution term 
$
t \mapsto \int_{t-\frac{1-\cdot}{\mu}}^t \gamma_\beta \left( \cdot + \mu (t-s) \right) \sigma(s) dW_s,
$
 is sample path continuous in $L^2(0,1)$. 
Since $\Veff$ is in $C^2_\mathcal{F}([0,T])$, $t \mapsto \beta(t,\cdot)$ is also sample path continuous in  $L^2(0,1)$.
The regularity of $\alpha$ can be proven in the same way.
The solution of the SDE system is sample path continuous if the input $t \mapsto \beta(t,0)$ is in $L^2_\mathcal{F}$~\cite{yong_stochastic_1999}.
Using the Burkholder-Davis-Gundy inequality \cite{le_gall_brownian_2016} we have that the stochastic process
$
t \mapsto \int_{t-\frac{1}{\mu}}^t \gamma_\beta \left( \mu (t-s) \right) \sigma(s) dW_s
$
is in $L^2_\mathcal{F}$, which in turn means that $\beta(t,0)$ is in $L^2_\mathcal{F}([0,T])$, which concludes the proof. 
\end{proof}
Since the backstepping transformation is a Volterra transformation, it is boundedly invertible \cite{krstic_boundary_2008} 
, which implies the well-posedness of the original system~\eqref{eq:coupled_PDE_SDE}.

\subsection{A delayed SDE}
Using the method of characteristics, we have that $X(t)$ is the solution of the following input-delayed stochastic differential equation with random coefficients 
\begin{equation}\label{eq:equivalent_delayed_SDE}
\left\{
\begin{array}{l}
     dX(t) = (A X(t) + B \Veff(t-h) + r(t)) dt + \sigma(t) dW_t  \\
     X(0) = X_0, \revAns{\quad \Veff(s) = \beta(0, 1 + \mu s) \  \forall s \in [-h,0)}
\end{array} 
\right.
\end{equation}
where $h \triangleq \frac{1}{\mu}$ and 
$
r(t) \triangleq B \int_{t-h}^t \gamma_\beta \left( \mu (t-s) \right) \sigma(s) dW_s.
$

Therefore, we now only focus on controlling the delayed SDE~\eqref{eq:equivalent_delayed_SDE} and minimizing the variance of the state. We follow a methodology similar to the one outlined in \cite{velho_mean-covariance_2023}. However, the inclusion of the extra random drift term $r(t)$ requires to modify our approach to minimize the variance, as this term cannot be predicted.

\section{Study of the delayed SDE : a stability result and a minimal bound of the variance}\label{VC}

In this section, we extend the Artstein transform \cite{artstein_linear_1982} to our stochastic setting to provide a predictor $Y$ of the state $X$ that follows a non-delayed SDE. 
From now on, we denote by $T>h$ a possibly infinite final time.


\subsection{Artstein transform}

Let $X$ be the process solving equation \eqref{eq:equivalent_delayed_SDE}. The Artstein transform of $X$ is the process $Y(t)$, which is adapted to $\mathcal{F}$, that is defined by 
\begin{equation}\label{eq:def_artstein_transform}
Y(t) \triangleq X(t) + \int_{t-h}^t e^{A(t-s-h)} B \Veff(s) ds. 
\end{equation}
One readily verifies that $Y$ satisfies the following non-delayed stochastic equation 
\begin{equation}\label{eq:artstein_system_linear}
\left\{
    \begin{array}{ll}
    dY(t) &= \hspace{1em} \left( A(t) Y(t) + \overline{B} \Veff(t)  + r(t) \right) dt + \sigma(t) dW_t, \\
    Y(0) &= \hspace{1em} 0 ,
    \end{array}
    \right.
\end{equation}
where $\overline{B} \triangleq e^{-Ah} B$. 
We can now apply known results from non-delayed stochastic control 
to steer \eqref{eq:artstein_system_linear} as desired. However, it is for now unclear how to compute the covariance of $X$, which we aim to estimate once the covariance of $Y$ is available. We have the following lemma
\begin{lemma}
Let $X$ be the process solving equation \eqref{eq:equivalent_delayed_SDE}, and $Y$ its Artstein transform defined in~\eqref{eq:def_artstein_transform}. Then, for all $t \in [h,T]$, we have 
\begin{align}
X(t) = e^{Ah} Y(t-h) &+ \int_{t-h}^t e^{A(t-s)} r(s) ds  \nonumber \\
&+ \int_{t-h}^t e^{A(t-s)} \sigma(s) dW_s .\label{eq:link_artstein_original} 
\end{align}
\end{lemma}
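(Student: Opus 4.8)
The plan is to prove \eqref{eq:link_artstein_original} by two independent representations of $X(t)$ via the method of characteristics / variation of constants, and matching them. Concretely, I would start from the delayed SDE \eqref{eq:equivalent_delayed_SDE} and write its mild (variation-of-constants) solution on $[t-h,t]$: for $t\ge h$,
\begin{align*}
X(t) = e^{Ah} X(t-h) &+ \int_{t-h}^t e^{A(t-s)}\bigl( B\Veff(s-h) + r(s)\bigr) ds \\
&+ \int_{t-h}^t e^{A(t-s)} \sigma(s)\, dW_s.
\end{align*}
The only term that is not yet in the desired form is $\int_{t-h}^t e^{A(t-s)} B\Veff(s-h)\,ds$, together with $e^{Ah}X(t-h)$; the goal is to show these two combine into $e^{Ah}Y(t-h)$.

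The key step is then to invoke the definition of the Artstein transform \eqref{eq:def_artstein_transform} evaluated at time $t-h$:
\[
Y(t-h) = X(t-h) + \int_{t-2h}^{t-h} e^{A(t-2h-s)} B \Veff(s)\, ds,
\]
so that
\[
e^{Ah} Y(t-h) = e^{Ah} X(t-h) + \int_{t-2h}^{t-h} e^{A(t-h-s)} B \Veff(s)\, ds.
\]
In the remaining integral I would substitute $s \mapsto s-h$, which turns $\int_{t-2h}^{t-h} e^{A(t-h-s)} B\Veff(s)\,ds$ into exactly $\int_{t-h}^t e^{A(t-s)} B \Veff(s-h)\,ds$. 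Plugging this back shows that $e^{Ah}X(t-h) + \int_{t-h}^t e^{A(t-s)} B\Veff(s-h)\,ds = e^{Ah}Y(t-h)$, and substituting into the variation-of-constants formula yields \eqref{eq:link_artstein_original}.

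An alternative (and essentially equivalent) route is to solve the non-delayed equation \eqref{eq:artstein_system_linear} for $Y$ by variation of constants on $[t-h,t]$, obtaining $e^{Ah}Y(t-h)$ plus integrals of $\overline{B}\Veff$, $r$ and $\sigma\,dW$, and then compare with the definition \eqref{eq:def_artstein_transform} of $Y(t)$ in terms of $X(t)$; the $\overline{B} = e^{-Ah}B$ factor is exactly what makes the $\Veff$ integrals cancel against the integral term in \eqref{eq:def_artstein_transform}. I would present the first route since it is the most direct. The main thing to be careful about — rather than a genuine obstacle — is bookkeeping on the limits of integration and the shift $s\mapsto s-h$, and making sure that the stochastic integral terms are handled as honest It\^o integrals (no issue here, since $\sigma$ is deterministic and $r$ is adapted, so the variation-of-constants formula for linear SDEs applies verbatim, as justified in \cite{yong_stochastic_1999}). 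No interchange-of-integration subtlety arises because all integrands are integrable on the bounded window $[t-h,t]$ by Lemma~\ref{lem:well_posed_SPDE_L2}.
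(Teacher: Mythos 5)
Your proposal is correct and follows essentially the same route as the paper's proof: the variation-of-constants formula for \eqref{eq:equivalent_delayed_SDE} on $[t-h,t]$, followed by substituting the Artstein definition \eqref{eq:def_artstein_transform} at time $t-h$ and the change of variable $s\mapsto s-h$ so the two $\Veff$ integrals coincide. The bookkeeping on the integration limits is exactly what the paper does, so no further changes are needed.
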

\begin{proof}
Using the analytic formula for linear SDEs~\cite{yong_stochastic_1999}, we obtain $X(t) =  e^{Ah} X(t-h) + \int_{t-h}^t e^{A(t-s)} r(s) ds + \int_{t-h}^t e^{A(t-s)} B \Veff(s-h) ds  + \int_{t-h}^t e^{A(t-s)} \sigma(s) dW_s.$
By expressing $X(t-h)$ in terms of $Y(t-h)$, a change of variable in the regular integral of $\Veff$ yields $X  (t) =  e^{Ah} Y(t-h) + \int_{t-h}^t e^{A(t-s)} r(s) ds 
 - e^{Ah} \int_{t-2h}^{t-h} e^{A(t-s-2h)} B \Veff(s) ds  + \int_{t-2h}^{t-h} e^{A(t-s-h)} B \Veff(s) ds  + \int_{t-h}^t e^{A(t-s)} \sigma(s) dW_s,$
 which gives the desired result. 
\end{proof}
Equation \eqref{eq:link_artstein_original} 
states that $X(t)$ can only be controlled through $Y(t-h)$, in that the additional noise term $\int_{t-h}^t e^{A(t-s)} \sigma(s) dW_s$ cannot be controlled. The other noise term can only be partially mitigated by $Y(t-h)$. 

\subsection{Stabilization of the delayed SDE through the Artstein transform.}
If the pair $(A,B)$ is controllable, 
 Artstein's predictor enables the design of a feedback controller to stabilize the delayed SDE in the sense that it ensures exponential convergence to zero of its mean, while keeping the covariance bounded. 

\begin{theorem}\label{thm:stabilization_system_feedback}
Assume that the pair $(A,B)$ is controllable. Define the feedback controller $\Veff(t) = - K Y(t),$
where $Y$ is given by equation~\eqref{eq:def_artstein_transform}, and $K$  is  such that $H \triangleq A - \overline{B}K$ is Hurwitz. Then, the control law $V_{in}(t) = \VBS(t) + \Veff(t)$ drives the means of the states to zero while keeping their variances bounded.
That is, there exist $C>0$, \revAns{(dependent on the parameters of the system and $K$)} and $\nu>0$ \revAns{(dependent on the eigenvalues of $A-\overline{B}K$)} , such that, for any initial conditions $(X_0,u_0,v_0)$, for all $x$ in $[0,1]$ and $t>0$:
\begin{align*}
   &  \Vert \espE[X(t)] \Vert + \Vert \espE[u(x,t)] \Vert + \Vert \espE[v(x,t)] \Vert \leq C e^{- \nu t} \times\\
   & \quad \times \bigl (\Vert \espE[X_0] \Vert + \Vert  \espE [u_0] \Vert + \Vert  \espE [v_0] \Vert \bigr ), \\
   &  \espE \left[ \Vert X(t)\Vert^2 \right]  + \espE \left[ \Vert u(x,t) \Vert^2 \right] + \espE \left[ \Vert v(x,t) \Vert^2 \right] \leq C
\end{align*}
\end{theorem}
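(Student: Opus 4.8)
The plan is to prove the theorem in two stages: first establish the claimed exponential decay and boundedness for the SDE state $X$ (and its Artstein predictor $Y$), and then transfer these estimates back to the PDE states $u,v$ through the invertibility of the backstepping transformation. For the SDE part, I would start from the closed-loop Artstein dynamics obtained by plugging $\Veff(t) = -KY(t)$ into~\eqref{eq:artstein_system_linear}, which gives $dY(t) = (HY(t) + r(t))\,dt + \sigma(t)\,dW_t$ with $H = A - \overline{B}K$ Hurwitz. Taking expectations and using that $r(t)$ is a centered stochastic integral (so $\espE[r(t)] = 0$), the mean $m(t) \triangleq \espE[Y(t)]$ solves the deterministic linear ODE $\dot m = Hm$, $m(0) = 0$, hence $m \equiv 0$; more usefully, from~\eqref{eq:link_artstein_original} and $\espE[r(s)] = 0$, $\espE[\int e^{A(t-s)}\sigma(s)dW_s] = 0$, we get $\espE[X(t)] = e^{Ah}\espE[Y(t-h)]$, and since $\espE[X(t)] = e^{At}X_0 + \int_0^t e^{A(t-s)}(B\espE[\Veff(s-h)])ds$ one shows the mean of $X$ satisfies a delayed linear ODE whose Artstein-reduced form has state matrix $H$; exponential decay at rate $\nu$ tied to the spectral abscissa of $H$ follows from standard delay-free linear ODE estimates. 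For the variance, I would derive the matrix Lyapunov/variance equation for $P(t) \triangleq \mathrm{Cov}(Y(t))$: differentiating, $\dot P = HP + PH^T + \Sigma(t)$ where $\Sigma(t)$ collects the diffusion contribution $\sigma(t)\sigma(t)^T$ plus the cross-covariance and covariance terms coming from $r(t)$ — all of which are bounded uniformly in $t$ because $\sigma \in L^\infty$ and, by~\eqref{eq:quadratic_variation_of_stochastic_integral}, $\espE[r(t)r(t)^T] = BB^T\int_{t-h}^t \gamma_\beta(\mu(t-s))^2\sigma(s)\sigma(s)^T\,ds$ is bounded since $\gamma_\beta$ is continuous on the compact $[0,1]$. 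Since $H$ is Hurwitz, the variation-of-constants solution $P(t) = \int_0^t e^{H(t-\tau)}\Sigma(\tau)e^{H^T(t-\tau)}d\tau$ is bounded, so $V_Y$ and hence (via~\eqref{eq:link_artstein_original}, which writes $X(t)$ as a bounded linear combination of $Y(t-h)$ and two stochastic integrals with bounded integrands) $V_X(t)$ and $\espE[\|X(t)\|^2]$ are bounded.

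Next I would transfer these bounds to the PDE states. In the target variables $(\alpha,\beta)$ this is immediate: by the method of characteristics (as in the proof of Lemma~\ref{lem:well_posed_SPDE_L2}), $\beta(t,x) = \Veff(t - \frac{1-x}{\mu}) + \int_{t-(1-x)/\mu}^t \gamma_\beta(x+\mu(t-s))\sigma(s)\,dW_s$, and $\Veff(t) = -KY(t)$ inherits exponential decay of its mean and boundedness of its second moment from $Y$; the stochastic convolution has bounded second moment by It\^o's isometry~\eqref{eq:quadratic_variation_of_stochastic_integral} and zero mean, so $\espE[\beta(t,x)]$ decays exponentially and $\espE[\|\beta(t,x)\|^2]$ (and the $L^2(0,1)$ norm) stays bounded. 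The analogous argument with the $\alpha$-characteristics and the boundary condition $\alpha(t,0) = Q\beta(t,0)$ handles $\alpha$. Finally, since the backstepping map~\eqref{eq:backstepping_transform_alpha_beta_definition} is a boundedly invertible Volterra transformation (bounded from $\mathbb{R}^n\times L^2\times L^2$ to itself, with bounded inverse, uniformly in time because the kernels are continuous on the fixed compact $\mathcal{T}$ and $\gamma$ is $C^1$), we have $\|(u(t,\cdot),v(t,\cdot))\|_{L^2} \lesssim \|X(t)\| + \|(\alpha(t,\cdot),\beta(t,\cdot))\|_{L^2}$ pointwise in $\omega$; taking expectations and using linearity of the transform (so it commutes with $\espE$) gives the mean estimate, and taking second moments gives the variance bound. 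Pointwise-in-$x$ estimates on $\espE[u(x,t)]$, $\espE[v(x,t)]$ as stated follow from the explicit characteristic formulas rather than from the $L^2$ bound.

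The main obstacle I anticipate is bookkeeping around the delay and the "prehistory" of $\Veff$ on $[-h,0)$, together with making the constants genuinely uniform in $t$ over a possibly infinite horizon. Concretely: (i) the relation $\espE[X(t)] = e^{Ah}\espE[Y(t-h)]$ from~\eqref{eq:link_artstein_original} only holds for $t \ge h$, so the transient on $[0,h]$ — where $X$ is driven by the given initial data $\beta(0,1+\mu s)$ rather than by the feedback — must be handled separately and shown to be controlled by $\|\espE[X_0]\| + \|\espE[u_0]\| + \|\espE[v_0]\|$ via the backstepping map evaluated at $t=0$; (ii) one must check that $Y$'s decay/boundedness does not degrade when pushed through the inverse Artstein transform $X(t) = Y(t) - \int_{t-h}^t e^{A(t-s-h)}B\Veff(s)ds$, which is fine because $\Veff = -KY$ and the integral is over a fixed-length window, but it requires $\Veff$ to be square-integrable on $[-h,0)$, i.e. $\beta(0,\cdot) \in L^2(0,1)$, which holds by assumption on $(u_0,v_0)$ and continuity of the kernels. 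None of these is deep, but assembling them into clean estimates with the single pair of constants $(C,\nu)$ advertised in the statement is where the real work lies; the Hurwitz property of $H$ and the boundedness of $\sigma$ and the kernels do all the heavy lifting for the exponential-decay and uniform-bound conclusions respectively.
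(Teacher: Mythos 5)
Your proposal is correct and follows essentially the same route as the paper's own proof: closed-loop Artstein dynamics $dY=(HY+r)dt+\sigma dW_t$ with $\espE[r]=0$ and $H$ Hurwitz for the exponential decay of the means, a variation-of-constants/It\^o-isometry bound for the second moments (the paper writes this out with Young's inequality on the cross terms rather than through the Lyapunov ODE for $\mathrm{Cov}(Y)$, but the computation is the same), then the characteristic formulas for $\alpha,\beta$ and the bounded inverse backstepping map to reach $u,v$. One small caution: your parenthetical ``$m(0)=0$ hence $m\equiv 0$'' takes equation~\eqref{eq:artstein_system_linear} at face value, whereas with the prehistory $\Veff(s)=\beta(0,1+\mu s)$ one has $Y(0)=X_0+\int_{-h}^0 e^{A(-s-h)}B\beta(0,1+\mu s)\,ds\neq 0$ in general --- you correctly identify and resolve this in your discussion of the transient, and it is exactly how the paper obtains the dependence on $(X_0,u_0,v_0)$.
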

The proof of the Theorem can be found in Appendix \ref{AP}.

Despite its theoretical advantages, the Artstein's transform encounters practical limitations. Discretizing the integral during its computation can potentially render the closed-loop system unstable, as discussed in \cite{mondie_finite_2003}.  To address this issue, solutions such as filtering have been proposed to implement the controller safely \cite{mondie_finite_2003,auriol_robustification_2023}. However, exploring these solutions further is beyond the scope of our study. 
Interestingly, setting $\gamma_\beta(0) = K$ in the backstepping transformation, we obtain $dX(t) = [(A-BK)X(t) + r(t)] dt + \sigma(t) dW_t$, which implies the stability if $(A-BK)$ is Hurwitz.

\subsection{Minimum variance bound of the SDE}

We leverage equation \eqref{eq:link_artstein_original} to write $X(t)$ as the sum of a $\mathcal{F}_{t-h}$-measurable process and a stochastic integral depending only on values of $dW_s$ for $s \in [t-h,t]$. The independence of both terms yields a direct relation between the variance of $X(t)$ and of $Y(t-h)$, which is later used in the next section.
\begin{lemma}\label{lem:split_artstein_equival_controllable_and_non}
Let $X$ be the process solving equation \eqref{eq:equivalent_delayed_SDE}, and let $Y$ be its Artstein transform~\eqref{eq:def_artstein_transform}. For all $t \in [h,T]$, we have
\begin{align}
V_X(t) = V_{\min}(t) +  &\espE  \left[   [ Y(t-h) + G(t-h) ]^T e^{A^T h} \right. \nonumber \\
&  \left. \times  e^{A h} [ Y(t-h) + G(t-h) ]     \right], \label{eq:eq_split_artstein_transform_control_and_non}
\end{align}
where
\begin{equation*}
\begin{split}
 V_{\min}(t) \triangleq \int_{t-h}^t & \sigma(s)^T \left[ e^{A(t-s)} + N(t-s) \right]^T \\
& \times \left[  e^{A (t-s)} + N(t-s) \right] \sigma(s) ds ,
\end{split}
\end{equation*}
and $G(t) \triangleq \int_{t-h}^t \left[ \int_t^{s+h} e^{A(t-\tau)}\gamma_\beta(\mu (\tau - s) ) d\tau \right] \sigma(s) dW_s $ and $N(u)  \triangleq \int_{-u}^0 e^{-A \tau} \gamma_\beta(\mu( \tau + u)) d\tau$.
\end{lemma}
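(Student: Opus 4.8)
The plan is to insert into the representation \eqref{eq:link_artstein_original} of $X(t)$ the explicit form of the random drift $r$, rearrange the resulting iterated stochastic integral by a stochastic Fubini argument so that $X(t)$ splits as an $\mathcal{F}_{t-h}$-measurable part plus a stochastic integral over $[t-h,t]$, and then read off the variance using independence of Brownian increments together with the It\^o isometry \eqref{eq:quadratic_variation_of_stochastic_integral}.

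Concretely, substitute $r(s)=B\int_{s-h}^{s}\gamma_\beta(\mu(s-\theta))\sigma(\theta)\,dW_\theta$ into the term $\int_{t-h}^{t}e^{A(t-s)}r(s)\,ds$ in \eqref{eq:link_artstein_original}, obtaining a Lebesgue integral of an It\^o integral. Because $\gamma_\beta$ is continuous and $\sigma\in L^\infty$, the integrand is square-integrable over the compact region $\{(s,\theta):t-h\le s\le t,\ s-h\le\theta\le s\}$, so the stochastic Fubini theorem lets us interchange the integrations. Splitting the region according to whether $\theta\in[t-2h,t-h]$ (then $s\in[t-h,\theta+h]$) or $\theta\in[t-h,t]$ (then $s\in[\theta,t]$) produces one stochastic integral in $dW_\theta$ over $[t-2h,t-h]$ and one over $[t-h,t]$. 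In the $[t-h,t]$ piece, the change of variables $\tau=s-t$ in the inner Lebesgue integral rewrites its kernel as $\int_{-(t-\theta)}^{0}e^{-A\tau}B\gamma_\beta(\mu(\tau+(t-\theta)))\,d\tau$, which is precisely $N(t-\theta)$; in the $[t-2h,t-h]$ piece, using $e^{A(t-\cdot)}=e^{Ah}\,e^{A(t-h-\cdot)}$ and comparing with the definition of $G$ identifies it with $e^{Ah}G(t-h)$.

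Adding these to the leading term $e^{Ah}Y(t-h)$ and to the noise term $\int_{t-h}^{t}e^{A(t-s)}\sigma(s)\,dW_s$ already in \eqref{eq:link_artstein_original} yields
\[
X(t)=e^{Ah}\bigl(Y(t-h)+G(t-h)\bigr)+\int_{t-h}^{t}\bigl[e^{A(t-s)}+N(t-s)\bigr]\sigma(s)\,dW_s .
\]
Here the first summand is $\mathcal{F}_{t-h}$-measurable ($Y$ is $\mathcal{F}$-adapted and $G(t-h)$ is an It\^o integral over $[t-2h,t-h]$), while the second is an It\^o integral of a deterministic kernel over $[t-h,t]$, hence of zero mean and independent of $\mathcal{F}_{t-h}$ since $W$ is an $\mathcal{F}$-Brownian motion. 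Expanding $V_X(t)=\espE\|X(t)-\espE[X(t)]\|^{2}$ with this decomposition, the cross term vanishes by independence and the zero-mean property; the It\^o-integral term contributes, via \eqref{eq:quadratic_variation_of_stochastic_integral} applied componentwise, exactly $\int_{t-h}^{t}\sigma(s)^{T}[e^{A(t-s)}+N(t-s)]^{T}[e^{A(t-s)}+N(t-s)]\sigma(s)\,ds=V_{\min}(t)$; and the first summand contributes $\espE$ of the quadratic form $e^{A^{T}h}e^{Ah}$ evaluated at $Y(t-h)+G(t-h)$ minus its mean. Since $\espE[G(t-h)]=0$ and $\espE[Y(t-h)]=0$ for the linear state feedbacks used in this paper (cf.\ Theorem~\ref{thm:stabilization_system_feedback}), the centering is immaterial, and we recover \eqref{eq:eq_split_artstein_transform_control_and_non}.

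The step I expect to be delicate is the stochastic Fubini bookkeeping: one must correctly partition the integration region in the three coupled time variables and then perform the two changes of variables so that the kernels line up with the somewhat intricate definitions of $N$ and $G$, with the $e^{Ah}$ prefactor ending up in the right place. The probabilistic part — independence of increments killing the cross term and the It\^o isometry producing $V_{\min}$ — is then routine.
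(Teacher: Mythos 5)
Your proof follows essentially the same route as the paper's: substitute the explicit form of $r$, apply the stochastic Fubini theorem to split its contribution into the $\mathcal{F}_{t-h}$-measurable piece $e^{Ah}G(t-h)$ and the kernel $N(t-s)$ inside the $[t-h,t]$ It\^o integral, then use independence of the two summands and the isometry \eqref{eq:quadratic_variation_of_stochastic_integral} to read off $V_{\min}$. Your remark that the stated identity implicitly requires $\espE[Y(t-h)+G(t-h)]=0$ (since the right-hand side is an uncentered second moment while $V_X$ is centered) is a fair observation that the paper's own proof passes over silently, but it does not change the argument.
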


\begin{proof} Fix $t \in [h,T]$. We start by separating the integral of $r(s)$ in \eqref{eq:link_artstein_original} into a $\mathcal{F}_{t-h}$-measurable process and a non-controllable process:
\small
\begin{equation*}
\begin{split}
& \int_{t-h}^t e^{A(t-\tau)} r(\tau) d\tau = \int_{t-h}^t \int_{\tau-h}^\tau  e^{A(t-\tau)} \gamma_\beta(\mu( \tau - s )) \sigma(s) dW_s d\tau,
\end{split}
\end{equation*}
\normalsize
Since the integrand in the double integral is bounded, we can apply the stochastic Fubini theorem \cite[Chapter 4, Theorem 45]{protter_general_2005} to obtain
\small
\begin{equation*}
\begin{split}
& \int_{t-h}^t e^{A(t-\tau)} r(\tau) d\tau = \int_{t-2h}^{t-h}  \left( \int_{t-h}^{s - h} e^{A(t-\tau)} \gamma_\beta(\mu( \tau - s )) d\tau \right) \times \\
 & \sigma(s) dW_s + \int_{t-h}^{t}  \left( \int_{s}^{t} e^{A(t-\tau)} \gamma_\beta(\mu( \tau - s )) d\tau \right) \sigma(s)  dW_s \\
& \int_{t-h}^t e^{A(t-\tau)} r(\tau) d\tau = e^{Ah} G(t-h) + \int_{t-h}^{t} N(t-s) \sigma(s)  dW_s.
\end{split}
\end{equation*}
\normalsize
We can then rewrite the state $X$ as the sum of a $\mathcal{F}_{t-h}$ adapted process and a noise:
\begin{equation}\label{eq:X_state_SDE_artstein_separated_drift}
\begin{split}
X(t) = e^{Ah} [ & Y(t-h) + G(t-h) ] \\ 
& + \int_{t-h}^t \left[ e^{A(t-s)}  + N(t-s) \right] \sigma(s) dW_s . 
\end{split}
\end{equation}
We have that $Y(t-h)$ and $G(t-h)$ are $\sigma( W_s: 0 \le s \le t-h ) $-measurable, and that for $r \in [0, t-h]$, $W_r$ is independent from the stochastic integral in equation \eqref{eq:X_state_SDE_artstein_separated_drift} (see details in \cite[Lemma 3]{velho_mean-covariance_2023}). Therefore, $Y(t-h) + G(t-h)$ is independent of the stochastic integral. Consequently, the variance of the sum of the integrals is equal to the sum of the variance of each integral. Using equation~\eqref{eq:quadratic_variation_of_stochastic_integral} for $f_1(s)=f_2(s)=(e^{A(t-s)}  + N(t-s))\sigma(s)$, we obtain that the variance of the stochastic integral corresponds $V_{min}$. This concludes the proof.


\end{proof}
Similarly to \cite{velho_mean-covariance_2023}, we note that the variance of $X$ is bounded by a minimal variance caused by the delay induced by the PDE. 





\section{Minimization of the variance through LQ control}\label{LQ}

In the previous section, we designed a feedback controller that stabilized in mean the interconnected PDE-SDE system while guaranteeing a bounded variance. However, such a control law may not guarantee that the variance remains small. In what follows, we provide an efficiently implementable control strategy that keeps the state covariance of the delayed SDE consistently low along the trajectory. For this, we propose to compute controllers via techniques from Linear Quadratic (LQ) control \cite{bismut_linear_1976}. In this section, we suppose the final time $T$ is finite. 
The goal consists of minimizing the quadratic functional cost $J_R$, defined as:
\begin{equation}\label{eq:def_quadratic_optimal_cost_2}
\begin{split}
    & J_R(\Veff,X) \triangleq  \espE \left[ \int_h^T X(t)^T Q(t) X(t) dt \right] \\
    & \quad + \espE \left[\int_0^{T-h} \Veff(t)^T R(t) \Veff(t) dt \right],
\end{split}
\end{equation}
where $Q \in L^\infty([0,T], \mathcal{S}_n^+)$ and $R \in L^\infty([0,T], \mathcal{S}_n^{++})$. This functional cost penalizes the weighted variance along the trajectory, as well as the control effort. \revAns{These matrices are chosen based on the desired trade-off between state penalization and control effort penalization throughout time.}
Our problem thus states:
\begin{equation}\label{eq:problem_min_cost_X}
    \min_{\Veff \in \mathcal{U}} J_R(\Veff,X) , \quad \text{$X$ solves SDE \eqref{eq:equivalent_delayed_SDE}.}
\end{equation}
Using equation \eqref{eq:link_artstein_original}, we replace the process $X$ with the process $Y + G$ in $J_R$. The benefit of this transformation is that optimal control techniques for non-delayed systems may be leveraged, e.g., \cite{bismut_linear_1976}.

\subsection{Equivalent form of the system.}
In this section, we use a change of variables to write the cost as a linear quadratic cost in terms of a non-delayed SDE. 
Let us introduce the state $\overline Y$ defined by 
\begin{equation}
    \overline{Y}(t) \triangleq Y(t) + G(t),
\end{equation}
where $Y$ has been defined in equation~\eqref{eq:def_artstein_transform}.
For $t > h$, let us denote $V_{min,Q}(t)$ as
\begin{equation*}
\begin{split}
 V_{\min,Q}(t) \triangleq \int_{t-h}^t & \sigma(s)^T \left[ e^{A(t-s)} + N(t-s) \right]^T Q(t) \cdot \\
& \left[  e^{A (t-s)} + N(t-s) \right] \sigma(s) ds .
\end{split}
\end{equation*}
\begin{lemma}\label{lem:rewrite_cost_artstein_nondelayed}
The cost function $J_R$ rewrites as
\begin{equation}\label{eq:def_quadratic_optimal_cost_artstein}
\begin{split}
&J_R(\Veff,X)  = \espE \left[ \int_0^{T-h} \overline{Y}(t)^T \overline{Q}(t) \overline{Y}(t) dt \right] \\ 
& + \espE  \left[ \int_0^{T-h} \Veff(t)^T R(t) \Veff(t) dt \right]  + \int_h^T V_{min,Q}(t) dt,
\end{split}
\end{equation}
where $\overline{Q}(t) \triangleq e^{A^T h} Q(t+h) e^{A h} $ is symmetric positive.
\end{lemma}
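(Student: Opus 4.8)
The plan is to substitute the representation of $X(t)$ in terms of $\overline{Y}$ provided by Lemma~\ref{lem:split_artstein_equival_controllable_and_non} directly into the state-penalization term of $J_R$ and to exploit the independence structure already established there. Recall from equation~\eqref{eq:X_state_SDE_artstein_separated_drift} that for $t \in [h,T]$ we may write $X(t) = e^{Ah}\overline{Y}(t-h) + I(t)$, where $I(t) \triangleq \int_{t-h}^t [ e^{A(t-s)} + N(t-s) ] \sigma(s)\, dW_s$, and $\overline{Y}(t-h)$ is $\mathcal{F}_{t-h}$-measurable while $I(t)$ has zero mean and is independent of $\overline{Y}(t-h)$.

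First I would expand the quadratic form $X(t)^T Q(t) X(t)$ into three pieces: the term $\overline{Y}(t-h)^T e^{A^Th} Q(t) e^{Ah}\overline{Y}(t-h)$, the cross term $2\,\overline{Y}(t-h)^T e^{A^Th} Q(t) I(t)$, and $I(t)^T Q(t) I(t)$. Taking expectations, the cross term vanishes: conditioning on $\mathcal{F}_{t-h}$ (or using independence plus $\espE[I(t)] = 0$) kills it. For the last term, I would use the It\^o isometry in the form~\eqref{eq:quadratic_variation_of_stochastic_integral}, applied componentwise with the matrix-valued integrand $[e^{A(t-s)} + N(t-s)]\sigma(s)$, together with the cyclic property of the trace, to identify $\espE[I(t)^T Q(t) I(t)] = V_{\min,Q}(t)$. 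Then I would integrate over $t \in [h,T]$ and perform the change of variable $t \mapsto t+h$ in the first integral, which turns $\int_h^T \overline{Y}(t-h)^T e^{A^Th} Q(t) e^{Ah}\overline{Y}(t-h)\, dt$ into $\int_0^{T-h} \overline{Y}(t)^T \overline{Q}(t) \overline{Y}(t)\, dt$ with $\overline{Q}(t) = e^{A^Th} Q(t+h) e^{Ah}$. The control-penalization term of $J_R$ is already in the desired form and is left untouched. Symmetry and positive semidefiniteness of $\overline{Q}(t)$ are immediate from those of $Q(t+h)$ since $\overline{Q}(t)$ is a congruence transform.

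The main obstacle, and the place requiring care rather than difficulty, is justifying the vanishing of the cross term and the isometry computation at the level of rigor the paper expects. One must be precise that $\overline{Y}(t-h) = Y(t-h) + G(t-h)$ is $\sigma(W_s : 0 \le s \le t-h)$-measurable (both summands are, by their defining formulas~\eqref{eq:def_artstein_transform} and the expression for $G$), and that $I(t)$, being a stochastic integral of a deterministic integrand against increments of $W$ on $[t-h,t]$, is independent of that $\sigma$-algebra; this is exactly the independence invoked in the proof of Lemma~\ref{lem:split_artstein_equival_controllable_and_non}, so I would simply cite it. A secondary subtlety is the integrability needed to apply Fubini when exchanging $\espE$ and $\int_h^T dt$: this follows from $Q \in L^\infty$, $\sigma \in L^\infty$, the boundedness of $N$ and of the matrix exponentials on the compact interval, and the fact that $\overline{Y} \in C^2_\mathcal{F}([0,T-h];\mathbb{R}^n)$ (inherited from Lemma~\ref{lem:well_posed_SPDE_L2} and the boundedness of the convolution kernel defining $G$). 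With these points addressed, assembling the three contributions yields~\eqref{eq:def_quadratic_optimal_cost_artstein}.
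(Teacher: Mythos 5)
Your proposal is correct and follows essentially the same route as the paper: the authors likewise substitute the decomposition $X(t) = e^{Ah}\overline{Y}(t-h) + \int_{t-h}^t [e^{A(t-s)}+N(t-s)]\sigma(s)\,dW_s$ from Lemma~\ref{lem:split_artstein_equival_controllable_and_non}, use the same independence/zero-mean argument to kill the cross term, and identify the remaining stochastic-integral contribution with $V_{\min,Q}(t)$ before shifting the time variable. Your write-up merely spells out the measurability and Fubini justifications that the paper leaves implicit in the phrase ``adjusting the computations proposed in Lemma~\ref{lem:split_artstein_equival_controllable_and_non}.''
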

\vspace{1em}
\begin{proof}
We can rewrite the term $\espE \left[ X(t)^T Q(t) X(t) \right]$ in equation~\eqref{eq:def_quadratic_optimal_cost_2} using $\bar Y$. Adjusting the computations proposed in Lemma~\ref{lem:split_artstein_equival_controllable_and_non}, we directly obtain 
\begin{align*}
\espE \left[ X(t)^T Q(t) X(t) \right] & = \espE \left[ \overline{Y}(t-h)^T \overline{Q}(t-h) \overline{Y}(t-h) \right] \\
& + V_{min,Q}(t).
\end{align*}
This concludes the proof.
\end{proof}
\begin{lemma}\label{lem:SDE_overline_Y_change_var_artstein}
    The process $\overline{Y}$ satisfies the following SDE
\begin{equation}\label{eq:SDE_overline_Y_change_var_artstein}
\left\{
    \begin{array}{ll}
    d\overline{Y}(t) &= \hspace{1em} \left( A \overline{Y}(t) + \overline{B} \Veff(t)  + \overline{r}(t) \right) dt \\
    & + \ (1 + g(0) ) \sigma(t) dW_t, \\
    \overline{Y}(0) &= \hspace{1em} Y_0 ,
    \end{array}
    \right.
\end{equation}
with $\overline{r}(t) = \int_{t-h}^t \Gamma(t-s) \sigma(s) dW_s$, $\Gamma(u) = B \gamma_\beta(\mu u) + g'(u) - A g(u)$ 
and $g(u) \triangleq  \int_0^{h-u} e^{-A \tau}\gamma_\beta(\mu (\tau + u )) d\tau$.
\end{lemma}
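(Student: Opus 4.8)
The plan is to derive the SDE for $\overline{Y}(t) = Y(t) + G(t)$ directly by differentiating each summand and combining the drifts and diffusions, then matching terms to the claimed form. First I would recall from equation~\eqref{eq:artstein_system_linear} that $dY(t) = (A Y(t) + \overline{B}\Veff(t) + r(t))\,dt + \sigma(t)\,dW_t$ with $r(t) = B\int_{t-h}^t \gamma_\beta(\mu(t-s))\sigma(s)\,dW_s$. The nontrivial step is to compute $dG(t)$, where $G(t) = \int_{t-h}^t \left[ \int_t^{s+h} e^{A(t-\tau)}\gamma_\beta(\mu(\tau-s))\,d\tau \right]\sigma(s)\,dW_s$. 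I would rewrite the inner bracket as $g(t-s)\sigma(s)$ after a change of variable $\tau \mapsto \tau' = \tau - t$, identifying the kernel $g(u) = \int_0^{h-u} e^{-A\tau'}\gamma_\beta(\mu(\tau'+u))\,d\tau'$; note $g$ is exactly the function defined in the statement, and $g(0) = \int_0^h e^{-A\tau'}\gamma_\beta(\mu\tau')\,d\tau'$, while $g(h) = 0$.

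Next I would differentiate $G(t) = \int_{t-h}^t g(t-s)\sigma(s)\,dW_s$. This is a process of the form $\int_{t-h}^t f(t,s)\,dW_s$ with a time-dependent, deterministic-in-$(t,s)$ kernel $f(t,s) = g(t-s)\sigma(s)$ that is continuously differentiable in $t$. The Leibniz-type rule for such stochastic convolutions gives $dG(t) = g(0)\sigma(t)\,dW_t - g(h)\sigma(t-h)\,dW_{t-h}\big|_{\text{contribution}} + \left(\int_{t-h}^t \partial_t[g(t-s)]\sigma(s)\,dW_s\right)dt$; since $g(h) = 0$ the lower-limit boundary term vanishes, leaving $dG(t) = g(0)\sigma(t)\,dW_t + \left(\int_{t-h}^t g'(t-s)\sigma(s)\,dW_s\right)dt$. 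The cleanest way to justify this rigorously is to apply the stochastic Fubini theorem (as already invoked in the proof of Lemma~\ref{lem:split_artstein_equival_controllable_and_non}) to express $G(t)$ as an ordinary integral of Itô integrals and differentiate, or to appeal directly to the known formula for the derivative of a stochastic convolution with smooth kernel.

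Then I would add: $d\overline{Y}(t) = dY(t) + dG(t) = \big(A Y(t) + \overline{B}\Veff(t) + r(t) + \int_{t-h}^t g'(t-s)\sigma(s)\,dW_s\big)\,dt + (1 + g(0))\sigma(t)\,dW_t$. To get the stated form I substitute $Y(t) = \overline{Y}(t) - G(t)$ in the drift, so $A Y(t) = A\overline{Y}(t) - A G(t) = A\overline{Y}(t) - \int_{t-h}^t A g(t-s)\sigma(s)\,dW_s$. Collecting the three stochastic-integral drift contributions, $r(t) - AG(t)\text{-term} + g'\text{-term} = \int_{t-h}^t \big[B\gamma_\beta(\mu(t-s)) + g'(t-s) - A g(t-s)\big]\sigma(s)\,dW_s = \int_{t-h}^t \Gamma(t-s)\sigma(s)\,dW_s = \overline{r}(t)$, which matches $\Gamma(u) = B\gamma_\beta(\mu u) + g'(u) - A g(u)$ exactly. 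The initial condition $\overline{Y}(0) = Y(0) + G(0) = 0 + G(0) = Y_0$ follows by definition (with $Y(0)=0$ from~\eqref{eq:artstein_system_linear}).

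The main obstacle is the rigorous justification of the differentiation rule for $G(t) = \int_{t-h}^t g(t-s)\sigma(s)\,dW_s$ — in particular making precise that the moving upper and lower limits together with the $t$-dependence inside the integrand produce exactly the boundary term $g(0)\sigma(t)\,dW_t$ (no $g(h)$ term, thanks to $g(h)=0$) plus the pathwise-differentiable drift $\int_{t-h}^t g'(t-s)\sigma(s)\,dW_s\,dt$. Everything else is bookkeeping: a change of variables to recognize the kernel $g$, and algebraic regrouping of the drift terms into $\Gamma$. I would handle the obstacle by reducing to the stochastic Fubini representation already used earlier in the paper, since $g$ and $g'$ are continuous and $\sigma \in L^\infty$, so all integrability hypotheses are immediate.
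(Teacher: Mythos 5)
Your proposal is correct and follows essentially the same route as the paper's proof: rewrite $G(t)=\int_{t-h}^t g(t-s)\sigma(s)\,dW_s$, apply the It\^o/Leibniz rule to get $dG(t)=g(0)\sigma(t)\,dW_t+\bigl(\int_{t-h}^t g'(t-s)\sigma(s)\,dW_s\bigr)dt$ using $g(h)=0$, then add $dY$ and regroup the drift (via $AY=A\overline{Y}-AG$) into $\overline{r}$. Your treatment is in fact slightly more careful than the paper's, since you make the change of variables identifying $g$ and the justification of the stochastic-convolution differentiation explicit.
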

\begin{proof}
Let us remark that the function $g$ is continuously differentiable and that for all $t>h$, $G(t) = \int_{t-h}^t g(t-s) \sigma(s) dW_s$. Ito's formula yields
\begin{align*}
dG(t) & = g(0) \sigma(t) dW_t - g(h) \sigma(t-h) dW_{t-h} \\ 
& + \left( \int_{t-h}^t g'(t-s) \sigma(s) dW_s \right) dt
\end{align*}
Since $g(h) = 0$, we obtain
\begin{align*}
d\overline{Y}(t) & = dY(t) + dG(t) \\
& = \left( A Y(t) + \overline{B} \Veff(t) + r(t) + A G(t) - A G(t) \right) dt\\
+ &\left( \int_{t-h}^t g'(t-s) \sigma(s) dW_s \right) dt+ (1 + g(0) ) \sigma(t) dW_t
\end{align*}
which finally yields equation \eqref{eq:SDE_overline_Y_change_var_artstein}.
\end{proof}
We can now leverage the results from \cite{bismut_linear_1976} to address the LQ problem expressed in terms of $\overline{Y}$. This approach yields an optimal control solution minimizing \eqref{eq:def_quadratic_optimal_cost_2} in the form of a feedback in $\overline{Y}$. 

\subsection{Solving the optimal control problem.}

When addressing a Linear Quadratic (LQ) stochastic control problem, the optimal control is characterized by a solution to a Riccati equation. Since, in our case, the SDE has a random coefficient ($\overline{r}$), the corresponding Riccati equation is a Backward SDE (BSDE). Solving BSDEs is generally complex and requires sophisticated numerical algorithms. However, in our case, we are able to analytically compute an explicit solution for the considered BSDE, allowing a relatively straightforward computation of the optimal control.

\begin{proposition}\label{prop:analytical_solution_LQ_random}
    The optimal control $\Veff^*$ minimizing \eqref{eq:def_quadratic_optimal_cost_2} is given by
\begin{equation}\label{eq:form_of_optimal_control_LQ_random_drift}
    \Veff^*(t) = - R^{-1} \overline{B}^T  \left(  P(t) \overline{Y}(t) + \phi(t)  \right),
\end{equation}
where $P$ is a symmetric matrix solution to the deterministic Riccati ODE
\begin{align*}
    \dot{P}(t) & = -A^T P(t) - P(t)  A - \overline{Q}(t)
 + \ P(t)  \overline{B}  R^{-1} \overline{B} ^T P(t),
\end{align*}
with $P(T-h)=0$, and where  $\phi$ is given by
\begin{equation*}
    \phi(t) = \int_{t-h}^t \left[  \int_t^{s+h} \Phi_\Pi(t,\tau)P(\tau)\Gamma(\tau - s) d\tau   \right] \sigma(s) dW_s,
\end{equation*}
with $\Phi_\Pi(\cdot, \cdot) $ the fundamental matrix associated to the matrix function $\Pi(\cdot)$ defined as $
\Pi(t) \triangleq P(t) \overline{B} R^{-1} \overline{B}^T - A.$
\end{proposition}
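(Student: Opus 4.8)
The plan is to recast the problem as a classical finite-horizon stochastic LQ problem and then to solve the associated backward Riccati system in closed form. First, by Lemma~\ref{lem:rewrite_cost_artstein_nondelayed} the cost $J_R(\Veff,X)$ equals $\espE\big[\int_0^{T-h}\overline{Y}(t)^T\overline{Q}(t)\overline{Y}(t)\,dt\big]+\espE\big[\int_0^{T-h}\Veff(t)^TR(t)\Veff(t)\,dt\big]$ plus the $\Veff$-independent constant $\int_h^T V_{min,Q}(t)\,dt$, and by Lemma~\ref{lem:SDE_overline_Y_change_var_artstein} the state $\overline{Y}$ obeys the non-delayed SDE~\eqref{eq:SDE_overline_Y_change_var_artstein}, whose coefficients $A,\overline{B},\overline{Q},R$ and whose diffusion $(1+g(0))\sigma$ are deterministic, the only stochastic ingredient being the $\mathcal{F}$-adapted, square-integrable affine drift $\overline{r}$. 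Thus, up to that additive constant, \eqref{eq:problem_min_cost_X} is a standard stochastic LQ problem on $[0,T-h]$ with a random affine term, and I would invoke the theory of \cite{bismut_linear_1976}: the optimal control is the affine feedback $\Veff^*(t)=-R(t)^{-1}\overline{B}^T\big(P(t)\overline{Y}(t)+\phi(t)\big)$, with $(P,\phi,\zeta)$ solving a backward stochastic Riccati system, and the resulting closed loop is well posed.

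Next I would exploit the special structure to decouple this backward system. Because randomness enters only through the additive term $\overline{r}$ while the diffusion is deterministic, the quadratic coefficient $P$ decouples and is deterministic: it solves the matrix Riccati ODE of the statement with terminal condition $P(T-h)=0$, which admits a unique bounded symmetric positive-semidefinite solution on $[0,T-h]$ by the classical theory (as $\overline{Q}$ is positive semidefinite and $R$ positive definite, there is no finite escape); I extend $P$ by $0$ on $(T-h,T]$. The affine coefficient $\phi$ then solves a linear terminal-value BSDE: generator affine in $(\phi,\zeta)$, forcing term $P(\cdot)\overline{r}(\cdot)$, coefficient matrix governed by $\Pi$, and $\phi(T-h)=0$.

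The heart of the proof is solving this linear BSDE explicitly. Writing it in variation-of-constants form with the fundamental matrix $\Phi_\Pi$ and taking conditional expectations gives $\phi(t)=\espE\big[\int_t^{T-h}\Phi_\Pi(t,\tau)P(\tau)\overline{r}(\tau)\,d\tau\,\big|\,\mathcal{F}_t\big]$, with $\zeta$ the associated martingale-representation integrand. Substituting $\overline{r}(\tau)=\int_{\tau-h}^\tau\Gamma(\tau-s)\sigma(s)\,dW_s$ and using that increments of $W$ after time $t$ have zero conditional mean, only the increments over $[t-h,t]$ survive in each $\overline{r}(\tau)$, so the integrand vanishes for $\tau>t+h$; since $\Phi_\Pi$, $P$, $\Gamma$ are continuous and $\sigma\in L^\infty$, the stochastic Fubini theorem \cite[Ch.~4, Thm.~45]{protter_general_2005} permits interchanging the $d\tau$- and $dW_s$-integrations. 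Bookkeeping the region $\{\,t\le\tau\le T-h,\ \tau-h\le s\le t\,\}$, and using $P\equiv0$ on $(T-h,T]$ to replace the upper limit $\min(s+h,T-h)$ by $s+h$, collapses everything to the stated formula $\phi(t)=\int_{t-h}^t\big[\int_t^{s+h}\Phi_\Pi(t,\tau)P(\tau)\Gamma(\tau-s)\,d\tau\big]\sigma(s)\,dW_s$; in particular $\phi(T-h)=0$, and $\phi$ is a stochastic convolution of a bounded kernel, hence in $C^2_\mathcal{F}([0,T-h])$, so $\Veff^*$ is admissible.

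Finally, I would confirm optimality by completing squares: applying It\^o's formula to $t\mapsto\overline{Y}(t)^TP(t)\overline{Y}(t)+2\phi(t)^T\overline{Y}(t)$ along the closed-loop trajectory generated by an arbitrary admissible $\Veff$, integrating over $[0,T-h]$, taking expectations (the stochastic integral vanishes by the integrability just established), and using the Riccati ODE and the $\phi$-BSDE to reduce every term depending on the control $\Veff$ to the single square $\big(\Veff-\Veff^*\big)^TR\big(\Veff-\Veff^*\big)$, one obtains $J_R(\Veff,X)=c_0+\espE\int_0^{T-h}\big(\Veff(t)-\Veff^*(t)\big)^TR(t)\big(\Veff(t)-\Veff^*(t)\big)\,dt$ with $c_0$ independent of $\Veff$, whence $\Veff^*$ is the unique minimizer since $R$ is positive definite. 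The abstract LQ reduction and the completion-of-squares are routine; the main obstacle is the explicit resolution of the BSDE -- justifying the stochastic Fubini interchange and carrying the integration-limit bookkeeping (especially the role of extending $P$ by zero past $T-h$) carefully enough that the conditional expectation really collapses to the stated closed form, and then verifying that this $\phi$ does satisfy the BSDE together with its terminal condition.
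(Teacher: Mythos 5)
Your proposal is correct and follows essentially the same route as the paper: reduce via Lemmas \ref{lem:rewrite_cost_artstein_nondelayed} and \ref{lem:SDE_overline_Y_change_var_artstein} to a non-delayed LQ problem with random affine drift, invoke \cite{bismut_linear_1976} for the feedback form and the backward Riccati system, observe that $P$ decouples into a deterministic Riccati ODE, and solve the linear BSDE for $\phi$ explicitly via conditional expectation plus stochastic Fubini. Your added completion-of-squares verification and your explicit handling of the integration limits (extending $P$ by zero past $T-h$) are more careful than the paper's write-up but do not constitute a different approach.
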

\begin{proof}
    Since $V_{min,Q}(t)$ is independent from the control variable, Lemma \ref{lem:rewrite_cost_artstein_nondelayed} shows that the minimization problem \eqref{eq:problem_min_cost_X} is equivalent to the following:
\begin{equation}\label{eq:problem_min_cost_Y}
    \min_{\Veff \in \mathcal{U}} \overline{J}_R(\Veff,\overline{Y}) , \quad \text{$\overline{Y}$ solves SDE \eqref{eq:SDE_overline_Y_change_var_artstein}.}
\end{equation}
where $\overline{J}_R\triangleq  \espE \left[ \int_0^{T-h} \overline{Y}(t)^T \overline{Q}(t) \overline{Y}(t) dt  \right]  + \espE \left[ \int_0^{T-h} \Veff(t)^T R(t) \Veff(t) dt \right] .$ To solve this LQ problem, we may apply the results in \cite{bismut_linear_1976} to obtain that the optimal control $\Veff^*$ is given by \eqref{eq:form_of_optimal_control_LQ_random_drift}, with $P$ verifying the BSDE
\begin{equation*}
    \left\{
    \begin{array}{ll}
    dP(t) & = \left[ -A^T P(t)  - P(t)  A - \overline{Q}(t) \right. \\
    & \ + \left. P(t)  \overline{B}  R^{-1} \overline{B} ^T P(t) \right] dt + \Lambda(t) dW_t,
    \end{array}
    \right.
\end{equation*}
with $P(T-h) = 0$, and with $\phi$ verifying the BSDE
\begin{equation*}\label{eq:Stochastic_Riccati_BSDE_optimal_drift_phi}
    \left\{
    \begin{array}{ll}
    d\phi(t) & = \left[ \Pi(t) \phi(t) - P(t) \overline{r}(t) \right]dt + \xi(t) dW_t, 
    \end{array}
    \right.
\end{equation*} with $\phi(T-h)=0$.
The BSDE verified by $P$ is only composed of deterministic coefficients, and therefore $\Lambda = 0$.
As for the BSDE verified by $\phi$, it is linear affine and can, therefore, be solved explicitly. Using the method described in \cite[Chapter 10.2]{touzi_introduction_2013}, we obtain
\begin{equation}\label{eq:conditional_expectation_sol_BDSE_phi_riccati}
\phi(t) = \espE \left[ \left. \int_t^{T-h} \Phi_\Pi(t, \tau) P(\tau) \overline{r}(\tau) d\tau \right\vert \mathcal{F}_t \right].
\end{equation}
by writing $\overline{r}$ as a stochastic integral, the stochastic Fubini theorem yields 
\footnotesize
\begin{equation*}
\phi(t) = \espE \left[ \left. \int_{t-h}^{T-h} \left( \int_{\min(t,s)}^{\max(T-h, s+h)} \Psi(t,\tau,s) d\tau \right) \\ \sigma(s) dW_s \right\vert \mathcal{F}_t \right]
\end{equation*}
\normalsize
where $
\Psi(t,\tau,s) \triangleq \Phi_\Pi(t, \tau) P(\tau) \Gamma(\tau - s).$
We conclude the proof using the classical result \cite{le_gall_brownian_2016} that for any function $f \in L^2_\mathcal{F}(0,T)$ and any $0 \leq t_1 \leq t_2 \leq T$,
$
\espE \left[ \left. \int_{0}^{t_2} f(s) dW_s \right\vert \mathcal{F}_{t_1} \right] = \int_{0}^{t_1} f(s) dW_s.
$
\end{proof}

\revAns{In simulations, computing the stochastic integral is straightforward as we have access to the random realizations of the Brownian motion that is sampled. However, in practice, obtaining the Brownian motion trajectory may not always be feasible. We can address this challenge through Monte Carlo methods. A common approach in stochastic control is to precompute numerous optimal trajectories through simulation and store them. When employing the algorithm online, users can then reference these pre-generated optimal trajectories.}




\section{Simulations}\label{SI}


\revAns{ In this section, we present simulation results applying our feedback stabilization method presented in Theorem \ref{thm:stabilization_system_feedback} to an unstable one-dimensional SDE. This unstable system is academic, the parameters are chosen to demonstrate the efficiency of our method. The SDE parameters are chosen as follows : $A = 0.6, B = 1$, $\sigma(t) = 0.6$ and $X_0 = 2$. The PDE  parameters are : $\mu = 2$, $\lambda = 1$, $\eta^{+} = \eta^{-} = 0.3$, $M = \rho = 1$ and $q = 0.25$. }

\begin{figure}[ht!]
\centering
  \includegraphics[width=0.90\linewidth]{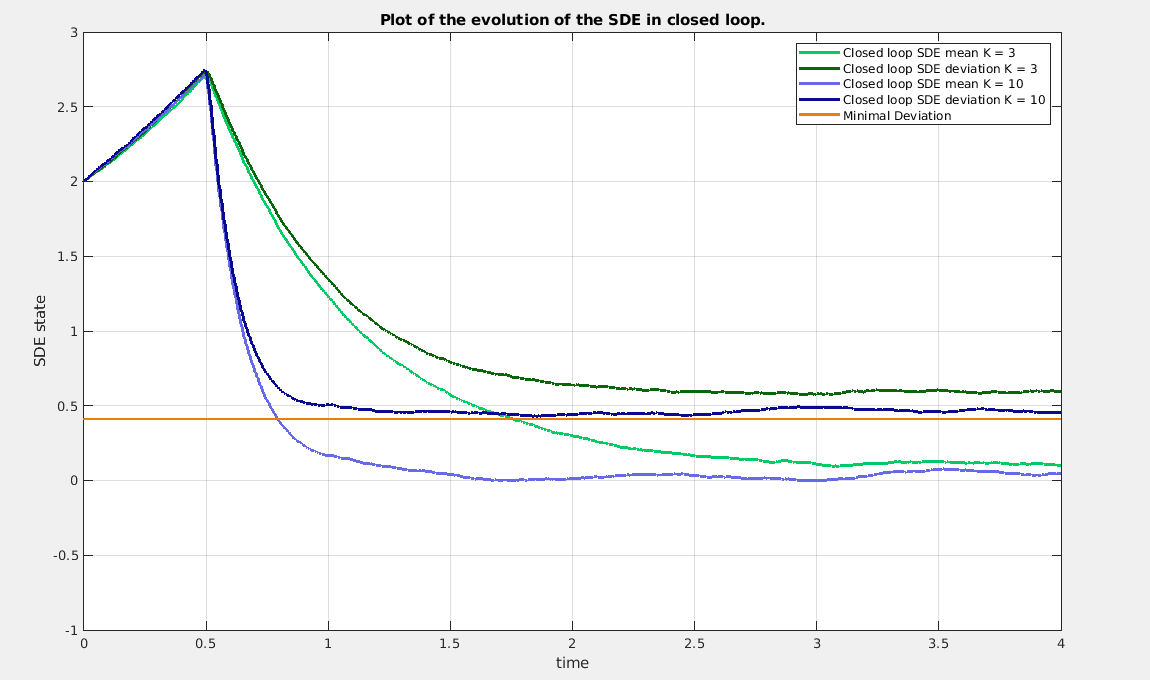}
  \caption{\centering Mean and deviation (square root of the variance) of the SDE state are depicted two systems stabilized with different gains. The values were computed through a Monte-Carlo approach.}
  \label{fig:plot_SDE_mean_var_closed_open}
\end{figure}


\revAns{  Results in Figure \ref{fig:plot_SDE_mean_var_closed_open} demonstrate that, even though the delay caused by the PDE induces a short spike at the start, the controller effectively stabilizes the state while maintaining bounded deviation. Moreover, a higher gain in the controller induces a lower variance of the state and therefore a safer control. }

\section{Conclusion}
In this paper, we have introduced two approaches for the control of bidirectionally coupled PDE+SDE systems. These two approaches utilize a combination of classical PDE+ODE tools and stochastic control techniques. One approach achieves stabilization in mean with bounded variance of the states, while the other minimizes the SDE variance to ensure additional robustness against noise.
We plan to explore further avenues for research, including the generalization of the PDE to a system of $n \times m$ linear first-order equations, as well as the extension of the SDE framework to incorporate multiplicative noise. \revAns{We also plan to investigate the potential benefits of employing alternate backstepping techniques used successfully in PDE+ODE systems, such as the multi-step approach \cite{irscheid2023output}.}




\bibliographystyle{ieeetr}
\bibliography{references_cropped}


\section{Appendix - Proof of Theorem \ref{thm:stabilization_system_feedback}}\label{AP}

In what follows, if $X$ is a stochastic process, we denote its expectation by $m_X(t) \triangleq \espE[ X(t) ]$. If $u$ is a function of time and space, we denote its expectation by $m_u(t,x) = \espE[ u(x,t) ]$. We denote by $C$ a possibly overloaded constant dependent on the parameters of the system.

\begin{proof}
    Let $(u, v, X)$ be the solution of the dynamics (2), and $(\alpha, \beta)$ given by the backstepping transformation (3), and $Y$ the Artstein transform of $X$. We consider a controller $V_{in} = V_{BS} + V_{eff}$ with the backstepping controller $ V_{BS}$ given by (6) and the effective SDE controller $V_{eff}$ given by $V_{eff}(t) = - K Y(t)$. The gain $K$ is chosen such that $H \triangleq A - \overline{B} K$ is Hurwitz, which is possible since if $(A,B)$ is controllable, then $(A,\overline{B})$ is controllable (see \cite[Theorem 6]{velho_mean-covariance_2023}).

    \textbf{First part of the proof: mean exponential stabilization.}
    With this controller, for $t > 0$, $Y$ follows the dynamic 
    \begin{equation*}
        dY(t) = \bigl ( H Y(t) + r(t) \bigr ) dt + \sigma(t) dW_t. 
    \end{equation*}
    where $r(t)$ is given by the stochastic integral as in equation (7). We note that it is therefore always of mean 0. The mean $m_Y(t)$ is therefore given by
    $$
    m_Y(t) = e^{H t} m_Y(0).
    $$
    At the initial time, the process $Y$ is given by 
    \begin{equation*}
    \begin{split}
        Y(0) & = X(0) + \int_{-h}^0 e^{A(-s-h)} B \Veff(s) ds \\
        & = X_0 + \int_{-h}^0 e^{A(-s-h)} B \beta(0, 1 + \mu s) ds.
    \end{split}
    \end{equation*}
    Through the definition of the backstepping transformation, since the kernels $K$ are bounded, we have that 
    $$
    \Vert \beta(0,x) \Vert \leq \Vert v(0,x) \Vert + C \Bigl ( \Vert X_0 \Vert + \Vert u_0 \Vert_{L^2} + \Vert v_0 \Vert_{L^2} \Bigr )
    $$
    Therefore 
    $$
    \Vert m_Y(t) \Vert \leq C e^{- \nu t} \Bigl ( \Vert X_0 \Vert + \Vert u_0 \Vert_{L^2} + \Vert v_0 \Vert_{L^2} \Bigr )
    $$
    Where $\nu$ is the highest eigenvalue of $H$. By taking the expectation of (10), we obtain
    $$
    m_X(t) = e^{A h } m_Y(t-h),
    $$
    and by taking the expectation of the explicit value of $\beta(t,x)$ (as in the proof of Lemma 1), we obtain 
    \begin{align*}
    m_\beta & (t,x) = \espE \biggl [  \Veff \left(t- \frac{1-x}{\mu} \right) \biggr . \\
    & \biggl . \hspace{5em} + \int_{t-\frac{1-x}{\mu}}^t \gamma_\beta \left( x + \mu (t-s) \right) \sigma(s) dW_s  \biggr] \\
    & = \espE \left[ \Veff \left(t- \frac{1-x}{\mu} \right) \right] = - K m_Y \left(t- \frac{1-x}{\mu} \right).
    \end{align*}
    We can obtain a similar equation for $\alpha$.
    Therefore, the expectations of $Y(t)$, $X(t)$, $\alpha(t,x)$ and $\beta(t,x)$ are exponentially decreasing. We conclude by bounding in the exact same way the expectation of $u$ and $v$ through the linear inverse backstepping transformation 
    \begin{align*}
    \begin{pmatrix}
        u(t,x)\\v(t,x)
    \end{pmatrix}=\begin{pmatrix}
        \alpha(t,x)\\\beta(t,x)
    \end{pmatrix}&+\int_0^x L(x,y)\begin{pmatrix}
        \alpha(t,y)\\\beta(t,y)
    \end{pmatrix}dy \nonumber \\   &+\gamma_L(x)X(t),\label{eq:backstepping_transform_alpha_beta_definition}
\end{align*}
where $L$ is a bounded kernel.

    \textbf{Second part of the proof: bound on the variance.}
    We need first to bound the variance of $Y(t)$. We do this through its explicit formula:
    $$
    Y(t) = e^{Ht} Y(0) + \int_0^t e^{H(t-s)} \sigma(s) dW_s + \int_0^t e^{H(t-s)} r(s) ds
    $$
    therefore, by expanding the product and using Young inequality on the crossed terms we obtain
    \begin{align*}
        & \espE[ Y(t)^T Y(t) ] \leq C \Bigl ( \espE \left [ \left \Vert e^{Ht} Y(0) \right \Vert^2 \right ] + \int_0^t \left \Vert e^{H(t-s)} \sigma(s) \right \Vert^2 ds \\
        & \hspace{5em} +  \int_0^t e^{H^T(t-s)} \espE [ r(s)^T r(s) ] e^{H(t-s)} ds \Bigr ).
    \end{align*}
    Thanks to the Itô formula for the quadratic variation of a stochastic integral, the expectation term in $r$ can be bounded as follows
    $$
    \espE [ r(s)^T r(s) ] \leq h \Vert \sigma \Vert_\infty^2 \Vert \gamma \Vert_\infty^2 \leq C.
    $$
    Since $H$ is Hurwitz and all the other terms in the integrals are bounded, we have that
    $$
    \espE[ Y(t)^T Y(t) ] \leq C.
    $$
    with C depending on the norms of the parameters of the system.
    From this bound, we can deduce in the same way that $\espE[ X(t)^T X(t) ] \leq C$ by using (10). The bound on the variance of $\alpha(t,x)$ and $\beta(t,x)$ is deduced from their explicit expression (always using Young inequality for the crossed terms, then the Ito formula for the stochastic integrals)
    
    \begin{align*}
        & \espE[ \beta(t,x)^T \beta(t,x) ] \leq  C h \Vert \sigma \Vert_\infty^2 \Vert \gamma \Vert_\infty^2\\
        & \quad + C \Vert K \Vert^2 \espE \left[ Y\left(t- \frac{1-x}{\mu} \right)^T Y\left(t- \frac{1-x}{\mu} \right) \right]  
    \end{align*}
    Finally, the bound on $u(t,x)$ and $v(t,x)$ can be deduced from the inverse backstepping transformation and the fact that the kernels are bounded.

\end{proof}

\end{document}